\newtheorem{theorem}{Theorem}[section]
\newtheorem{mtheorem}[theorem]{Main Theorem}
\theoremstyle{definition}
\newtheorem{definition}[theorem]{Definition}
\theoremstyle{remark}
\newtheorem{remark}[theorem]{Remark}
\numberwithin{equation}{section}
\title[Counting parts divisible by $k$ in partitions]{Counting the parts divisible by $k$ in all the partitions of $n$ whose parts have multiplicity less than $k$}
\author[Herden, Sepanski, Stanfill, $\ldots$]{Daniel Herden, Mark R. Sepanski, Jonathan Stanfill, Cordell C. Hammon, Joel Henningsen, Henry Ickes, Jorge Marchena Menendez, Taylor Poe, Indalecio Ruiz, Edward L. Smith}
\email{daniel\_herden@baylor.edu, mark\_sepanski@baylor.edu}
\begin{document}


\keywords{Partitions, combinatorial proofs, Glaisher's bijection.}
\subjclass[2010]{Primary 11P83; Secondary 05A17, 05A19.}

\begin{abstract}
Recent results by Andrews and Merca on the number of even parts in all partitions of $n$ into distinct parts, $a(n)$, were derived via generating functions. This paper extends these results to the number of parts divisible by $k$ in all the partitions of $n$ for which
the multiplicity of each part is strictly less than $k$, $a_k(n)$. Moreover, a combinatorial proof is provided using an extension of Glaisher’s bijection. Finally, we give the generating functions for this new family of integer sequences and use it to verify generalized pentagonal, triangular, and square power recurrence relations.
\end{abstract}

\maketitle
\tableofcontents


\section{Introduction}

Partitions of integers is a subject deeply intertwined with myriad subfields of mathematics and has been studied by a host of great mathematicians, Leibniz, Euler, Sylvester, Rogers, Hardy, MacMahon, Ramanujan, and Rademacher, to name only a few \cite{partitions}.

The celebrated Euler Partition Identity \cite[Corollary 1.2]{partitions} says that the number of partitions of $n$ into odd parts is equal to the number of partitions of $n$ into distinct parts. Glaisher \cite{G} found a beautiful combinatorial proof and extension of this result. Namely, for any $k \ge 2$, the number of partitions of $n$ with no part divisible by $k$ is equal to the number of partitions
of $n$ with each part repeated less than $k$ times. Euler's Identity is a special case of Glaisher's result for $k=2$. Franklin \cite{F} also found an extension for which Euler's Identity is a special case when $j = 0$.

\begin{theorem}[\cite{P}] \label{Franklin}
For any partition $\lambda = (\lambda_1^{m_1}, \lambda_2^{m_2}, \ldots, \lambda_\ell^{m_\ell})$,
denote by $\gamma_{\mathcal{O}}(\lambda)$ the number of even bases $\lambda_i$,
and by $\gamma_{\mathcal{D}}(\lambda)$ the number of repeated bases $\lambda_i$. Then, for $j \ge 0$,
the number of partitions of $n$ with $\gamma_{\mathcal{O}}(\lambda) = j$ is equal to the number
of partitions of $n$ with $\gamma_{\mathcal{D}}(\lambda) = j$.
\end{theorem}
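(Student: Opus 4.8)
The plan is to prove the stronger statement that the two statistics $\gamma_{\mathcal{O}}$ and $\gamma_{\mathcal{D}}$ are \emph{equidistributed} over all partitions of $n$, which is exactly the claim once one reads off the coefficient of $z^j$ in a suitable two-variable generating function. First I would record that both statistics split as a sum of independent contributions, one for each distinct part-value $j$, so that $\sum_\lambda q^{|\lambda|}z^{\gamma_{\mathcal{O}}(\lambda)}$ and $\sum_\lambda q^{|\lambda|}z^{\gamma_{\mathcal{D}}(\lambda)}$ are infinite products over $j\ge 1$. Tracking each part-value separately, an odd value $j$ never contributes to $\gamma_{\mathcal{O}}$ and supplies the factor $\frac{1}{1-q^j}$, while an even value $j$ supplies $1+z\frac{q^j}{1-q^j}=\frac{1-(1-z)q^j}{1-q^j}$ (the marker $z$ appears as soon as the value is present at all). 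For $\gamma_{\mathcal{D}}$ every value $j$ is treated alike---absent, present once, or present with multiplicity $\ge 2$---giving $1+q^j+z\frac{q^{2j}}{1-q^j}=\frac{1-(1-z)q^{2j}}{1-q^j}$.

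The heart of the computation is then the identity
\[
\prod_{j\text{ odd}}\frac{1}{1-q^j}\prod_{j\text{ even}}\frac{1-(1-z)q^j}{1-q^j}
=\prod_{j\ge 1}\frac{1-(1-z)q^{2j}}{1-q^j},
\]
which I would verify by pulling the common factor $\prod_{j\ge1}(1-q^j)^{-1}$ out of each side; both products then reduce to $\prod_{j\ge1}(1-q^j)^{-1}\prod_{i\ge1}\bigl(1-(1-z)q^{2i}\bigr)$, since the even-indexed numerators on the left are precisely the doubled-exponent numerators on the right. Comparing the coefficient of $z^j q^n$ on the two sides yields the theorem. As a sanity check, setting $z=1$ collapses both sides to the ordinary partition generating function, and extracting $z^0$ recovers Euler's identity (no even bases versus no repeated bases, i.e.\ odd parts versus distinct parts).

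Because the paper's theme is bijective, I would also aim to upgrade this to a combinatorial proof by exploiting Glaisher's grouping by odd core: writing every part as $2^a\cdot c$ with $c$ odd and collecting, for each fixed odd $c$, the multiplicities $(e_0,e_1,e_2,\dots)$ of the values $c,2c,4c,\dots$. Both statistics factor over odd cores ($\gamma_{\mathcal{O}}$ counts, per core, the indices $a\ge 1$ with $e_a\ge 1$, and $\gamma_{\mathcal{D}}$ counts the indices with $e_a\ge 2$), and the weight $c\sum_a 2^a e_a$ is preserved core-by-core. This reduces the entire theorem to a single weight-preserving bijection on \emph{binary partitions} of each integer $N=\sum_a 2^a e_a$ that interchanges the number of even powers used with the number of repeated powers.

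That last bijection is the step I expect to be the main obstacle. The generating-function equality above is not ``local'': matching the two products required a telescoping regrouping rather than a factor-by-factor correspondence, which signals that no naive value-by-value swap will succeed. My approach would be to build the map from the elementary carry/borrow moves $2\cdot 2^a\leftrightarrow 2^{a+1}$ (which preserve $N$) together with an induction on $N$, arranging the moves so that each newly created or destroyed repetition is charged against exactly one even power that appears. Verifying that this process is well defined and involutive, and that it transports $\gamma_{\mathcal{O}}$ to $\gamma_{\mathcal{D}}$ exactly, is where the real work lies.
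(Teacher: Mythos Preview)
Your generating-function argument is correct and proves the theorem for every $j$: the two bivariate products are computed correctly, and the identity
\[
\prod_{j\text{ odd}}\frac{1}{1-q^j}\prod_{j\text{ even}}\frac{1-(1-z)q^j}{1-q^j}
=\prod_{j\ge 1}\frac{1-(1-z)q^{2j}}{1-q^j}
\]
is exactly the reindexing you describe. This already establishes the statement; the bijective upgrade you sketch afterward is optional, and you are right that the carry/borrow step on binary partitions is where the difficulty lies (you do not complete it, but you do not need to).

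It is worth knowing that the paper does \emph{not} prove this theorem at all: Theorem~\ref{Franklin} is quoted as a classical result of Franklin (via Pak's survey), with no argument given for general $j$. The only thing the paper contributes toward it is the Remark following Theorem~\ref{B to C}, which observes that the bijection $B_{2,m}(n)\longleftrightarrow C_{2,m}(n)$---Glaisher's map on the odd-base portion together with the swap $a^b\mapsto b^a$ on the single even base---handles the special case $j=1$. So your approach is both different from and strictly stronger than what the paper offers: your generating-function proof covers every $j$, while the paper's bijection is restricted to $j=1$. If you wanted a bijection in the paper's spirit for all $j$, the natural extension would be to apply the Glaisher/swap idea to \emph{each} even base separately rather than reducing to binary partitions, though making that work simultaneously for several even bases (without collisions on the image side) requires care; your odd-core decomposition is a reasonable alternative framework, but, as you note, it shifts the whole burden onto a single nontrivial bijection that you have not yet constructed.
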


There have been many related results that count partitions with particular properties.
Most notable among them may be the Rogers-Ramanujan Identity \cite[Corollary 7.6]{partitions} that the number of partitions of $n$ in which the difference between any two parts is at least $2$ equals the number of partitions of $n$ into parts $\equiv 1$ or $4$ mod $5$.

In this paper, we want to concentrate on Franklin's result. Theorem \ref{Franklin} has been rediscovered by Wilf \cite{W} and, more recently, by Andrews~\cite{A} and Fu, Tang \cite{FT}. In \cite{A}, Andrews used generating functions to prove a conjecture of Beck that the number of partitions of $n$ with $\gamma_{\mathcal{O}}(\lambda)=1$ equals the difference between the number of parts in all odd partitions of $n$ and the number of parts in all distinct partitions of~$n$. Later, Ballantine and Bielak \cite{BB} gave a combinatorial proof of Beck's conjecture, while Yang \cite{Y} and Li, Wang \cite{LW} provided generalizations. Li and Wang also investigated analogues
of Beck's conjecture for compositions \cite{LW2}. In~\cite{AM}, Andrews and Merca introduced new identities which link the number of even parts in the partitions of $n$ into distinct parts, $a(n)$, to partitions of $n$ with $\gamma_{\mathcal{O}}(\lambda)=1$.

In this note, we give a combinatorial proof of a generalization of the results in~\cite{AM}. In that paper, Andrews and Merca introduced four additional sequences: the number of partitions of $n$ into an odd (even, respectively) number of parts in which the set of even parts has only one element and the number of partitions of $n$ in which exactly one part is repeated and this part is odd (even, respectively).

Here, see \S \ref{sec: init defs}, we let
$$a_k(n)$$
count the number
of parts that are divisible by $k$ in all partitions of $n$ for which the multiplicity of each part is strictly less than $k$.
We let
$$b_{k,0}(n) \text{  and  } b_{k,1}(n)$$
count the partitions $\lambda = (\lambda_1^{m_1}, \lambda_2^{m_2}, \ldots, \lambda_\ell^{m_\ell})$ of $n$ satisfying (1) exactly one $\lambda_i$ is divisible by $k$ and (2) the corresponding exponent $m_i$ is divisible by $k$ ($b_{k,0}(n)$) or not divisible by $k$ ($b_{k,1}(n)$).
Finally, we let
$$ c_{k,0}(n) \text{  and  } c_{k,1}(n)$$
count the partitions $\lambda = (\lambda_1^{m_1}, \lambda_2^{m_2}, \ldots, \lambda_\ell^{m_\ell})$ of $n$ satisfying (1) exactly one $\lambda_i^{m_i}$ has $m_i \geq k$ and (2) the corresponding $\lambda_i$ is divisible by $k$ ($c_{k,0}(n)$) or not divisible by $k$ ($c_{k,1}(n)$).

Our main result (Theorem \ref{main}), generalizing \cite{AM} for $k=2$, is
	$$b_{k,0}(n)=c_{k,0}(n),$$
	$$b_{k,1}(n)=c_{k,1}(n),$$
	$$a_k(n) = b_{k,1}(n)-(k-1)b_{k,0}(n) = c_{k,1}(n)-(k-1)c_{k,0}(n).$$
Combinatorial proofs are given in \S \ref{sec:comb ids}.
We recently discovered that Li, Wang, \cite[Theorems 1.11 and 1.13]{LWpre}, already have a preprint that gives a very different proof of this result.

The sequence $a(n)$ from Andrews and Merca \cite{AM} appears in the OEIS as A116680. However, the family of integer sequences $a_k(n)$ appears to be new (for $k>2$) and quite interesting. A table of small values may be found in \S \ref{sec:gen fun}. In that same section, the generating function is also provided,
$$ a_k(n) = \prod_{n=1}^{\infty} \frac{1-q^{kn}}{1-q^n}\cdot
\sum_{n=1}^{\infty}
\frac{q^{kn} + 2q^{2kn} +\dots+ (k-1)q^{(k-1)kn}}{1 + q^{kn} + q^{2kn} +\dots+ q^{(k-1)kn}}.$$

In \S \ref{sec: recur reln}, using the generating function, we show that the sequences $a_k(n)$ satisfy the generalized pentagonal number recurrence relation when $k\nmid n$ (Theorem \ref{t5.2}),
\begin{align*}
	a_k(n)&=a_k(n-1)+a_k(n-2)-a_k(n-5)-a_k(n-7)\\
	&\quad\, +a_k(n-12)+a_k(n-15)-a_k(n-22)-a_k(n-26)+\cdots.
\end{align*}
They also satisfy the triangular number recurrence relation when $k$ is even and $n$ is odd (Theorem \ref{t5.3}),
\begin{align*}
	a_k(n)&=a_k(n-1)+a_k(n-3)-a_k(n-6)-a_k(n-10)\\
	&\quad\, +a_k(n-15)+a_k(n-21)-a_k(n-28)-a_k(n-36)+\cdots,
\end{align*}
and, still for $k$ even and $n$ odd, the square power recurrence relation (Theorem \ref{t6.3}),
\begin{align*}
	a_k(n)&=a_k(n-1)+a_k(n-2)-a_k(n-4)-a_k(n-8)\\
	&\quad\, +a_k(n-9)+a_k(n-18)-a_k(n-16)-a_k(n-32)+\cdots,
\end{align*}
while for $k$ and $n$ both even we have the square power identity (Theorem \ref{t6.4}),
\begin{align*}
	& a_k(n-1)-a_k(n-4)+a_k(n-9)-a_k(n-16)+\cdots\\
	&\qquad\, = a_k(n-2)-a_k(n-8)+a_k(n-18)-a_k(n-32)+\cdots.
\end{align*}
We also show that this identity is shared with the partition function (Theorem \ref{t6.5}),
\begin{align*}
	& p(n-1)-p(n-4)+p(n-9)-p(n-16)+\cdots\\
	&\qquad\, = p(n-2)-p(n-8)+p(n-18)-p(n-32)+\cdots = [p(n) - p_{\mathcal{DO}}(n)]/2,
\end{align*}
for $n$ even, where $p_{\mathcal{DO}}(n)$ is the number of partitions of $n$ into distinct odd parts.

In addition, we give some thoughts and conjectures on some intriguing new possible recurrence relations.

\section{Initial Definitions and Statement of Main Theorem} \label{sec: init defs}

For $n\in\mathbb{Z}^{\ge 0}$, let $\mathcal{P}(n)$ be the set of partitions of $n$.
In general, for $\lambda \in \mathcal{P}(n)$, we write $$\lambda = (\lambda_1^{m_1}, \lambda_2^{m_2}, \ldots, \lambda_\ell^{m_\ell})$$
with distinct $\lambda_i$, each $m_i \geq 1$, and $n = \sum_{i=1}^\ell m_i \lambda_i$, and we consider partitions equivalent up to reordering. We refer to $m_i$ as the \emph{multiplicity} of the part $\lambda_i$. When concatenating such sequences, we combine terms with the same base and add the corresponding exponents.

\begin{definition}
	Let $n,k\in \mathbb{Z}^{\ge 0}$ with $k\ge 2$. Let $$a_k(n)$$ count the number
	of parts (with multiplicity) that are divisible by $k$ in all partitions of $n$ for which the multiplicity of each part is strictly less than $k$.
\end{definition}

For example, the partitions of $n = 9$ are
\begin{align*}
	& (9),\, (8,1),\, (7,2),\, (7,1^2),\, (6,3),\,
	(6,2,1),\, (6,1^3),\, (5,4),\, (5,3,1),\, (5,2^2),\, \\
	&
	(5,2,1^2),\, (5,1^4),\, (4^2,1),\, (4,3,2),\,
	(4,3,1^2),\, (4,2^2,1),\, (4,2,1^3),\, (4,1^5),\,  \\
	&
	(3^3),\, (3^2,2,1),\, (3^2,1^3),\, (3,2^3),\,
	(3,2^2,1^2),\, (3,2,1^4),\, (3,1^6),\, (2^4,1),\,    \\
	&
	(2^3,1^3),\, (2^2,1^5),\, (2,1^7),\, (1^9).
\end{align*}
The partitions of $9$ with the multiplicity of each part less than $k = 3$ are
\begin{align*}
	& (9),\, (8,1),\, (7,2),\, (7,1^2),\, (6,3),\,
	(6,2,1),\,  (5,4),\, (5,3,1),\, (5,2^2),\, \\
	&
	(5,2,1^2),\, (4^2,1),\, (4,3,2),\,
	(4,3,1^2),\, (4,2^2,1),\, (3^2,2,1),\, (3,2^2,1^2).
\end{align*}
Of these, the ones that have parts divisible by $3$ are
\begin{align*}
	& (9),\, (6,3),\, (6,2,1),\, (5,3,1),\, (4,3,2),\,  (4,3,1^2),\, (3^2,2,1),\, (3,2^2,1^2).
\end{align*}
As $a_3(9)$ counts the parts divisible by $3$ with multiplicity, we see that
$$ a_3(9) = 10. $$

\begin{definition}
	Let $n,k\in \mathbb{Z}^{\ge 0}$ with $k\ge 2$.
	\begin{itemize}
		\item[$(a)$] Let $$b_{k,0}(n)$$ count all $\lambda \in \mathcal{P}(n)$,
		$\lambda = (\lambda_1^{m_1}, \lambda_2^{m_2}, \ldots, \lambda_\ell^{m_\ell})$, satisfying
		\begin{itemize}
			\item[$(1)$] exactly one $\lambda_i$ is divisible by $k$, and
			\item[$(2)$] the corresponding exponent $m_i$ is divisible by $k$.
		\end{itemize}
		\item[$(b)$] Let $$b_{k,1}(n)$$ count all $\lambda \in \mathcal{P}(n)$,
		$\lambda = (\lambda_1^{m_1}, \lambda_2^{m_2}, \ldots, \lambda_\ell^{m_\ell})$, satisfying
		\begin{itemize}
			\item[$(1)$] exactly one $\lambda_i$ is divisible by $k$, and
			\item[$(2)$] the corresponding exponent $m_i$ is not divisible by $k$.
		\end{itemize}
	\end{itemize}
\end{definition}

For example, the partitions of $n=9$ with exactly one base $\lambda_i$ divisible by $k = 3$ are
\begin{align*}
	& (9),\,
	(6,2,1),\, (6,1^3),\,  (5,3,1),\,
	(4,3,2),\,
	(4,3,1^2),\,
	(3^3),\,   \\
	&
	(3^2,2,1),\, (3^2,1^3),\, (3,2^3),\,
	(3,2^2,1^2),\, (3,2,1^4),\, (3,1^6).
\end{align*}
Of these, the only one whose corresponding exponent $m_i$ is divisible by $3$ is
$$ (3^3) $$
so
$$ b_{3,0}(9) = 1 \qquad \mbox{and} \qquad b_{3,1}(9) = 12. $$

\begin{definition}
	Let $n,k\in \mathbb{Z}^{\ge 0}$ with $k\ge 2$.
	\begin{itemize}
		\item[$(a)$] Let $$c_{k,0}(n)$$ count all $\lambda \in \mathcal{P}(n)$,
		$\lambda = (\lambda_1^{m_1}, \lambda_2^{m_2}, \ldots, \lambda_\ell^{m_\ell})$, satisfying
		\begin{itemize}
			\item[$(1)$] exactly one $\lambda_i^{m_i}$ has $m_i \geq k$, and
			\item[$(2)$] the corresponding $\lambda_i$ is divisible by $k$.
		\end{itemize}
		\item[$(b)$] Let $$c_{k,1}(n)$$ count all $\lambda \in \mathcal{P}(n)$,
		$\lambda = (\lambda_1^{m_1}, \lambda_2^{m_2}, \ldots, \lambda_\ell^{m_\ell})$, satisfying
		\begin{itemize}
			\item[$(1)$] exactly one $\lambda_i^{m_i}$ has $m_i \geq k$, and
			\item[$(2)$] the corresponding $\lambda_i$ is not divisible by $k$.
		\end{itemize}
	\end{itemize}
\end{definition}

For example, the partitions of $n=9$ with exactly one $\lambda_i$ with multiplicity $m_i$ at least $k = 3$ are
\begin{align*}
	& (6,1^3),\, (5,1^4),\, (4,2,1^3),\, (4,1^5),\,
	(3^3),\, (3^2,1^3),\, (3,2^3),\,   \\
	&
	(3,2,1^4),\, (3,1^6),\, (2^4,1),\,
	(2^2,1^5),\, (2,1^7),\, (1^9).
\end{align*}
Of these, the only one whose corresponding $\lambda_i$ is divisible by $3$ is
$$ (3^3) $$
so
$$ c_{3,0}(9) = 1 \qquad \mbox{and} \qquad c_{3,1}(9) = 12.$$

We will prove the following result in this paper. Li, Wang have a preprint, \cite[Theorems 1.11 and 1.13]{LWpre}, that gives an alternate proof of this result.

\begin{mtheorem} \label{main}
For all $n,k\in \mathbb{Z}^{\ge 0}$ with $k\ge 2$, the following holds:
	\begin{itemize}
		\item[$(a)$] $b_{k,0}(n)=c_{k,0}(n)$ and $b_{k,1}(n)=c_{k,1}(n)$
		\item[$(b)$] $a_k(n) = b_{k,1}(n)-(k-1)b_{k,0}(n) = c_{k,1}(n)-(k-1)c_{k,0}(n).$
	\end{itemize}
\end{mtheorem}

For $n=9$ and $k=3$, we have $a_3(9) = 10= b_{3,1}(9)-2b_{3,0}(9) = c_{3,1}(9)-2c_{3,0}(9).$

\section{Combinatorial Identities} \label{sec:comb ids}

\subsection{Combinatorial Definitions}

\begin{definition}	
		Let
		\begin{align*}
			A_k(n) &= \{
			(\lambda = (\lambda_1^{m_1}, \lambda_2^{m_2}, \ldots, \lambda_\ell^{m_\ell}), \lambda_{i_0}, m)
			\mid \\
			& \qquad\quad
			\lambda \in \mathcal{P}(n), \,
			m_i <k \, \forall i, \,
			1 \leq i_0 \leq \ell, \,
			k \mid \lambda_{i_0}, \,   
			0 \leq m < m_{i_0} \}.
		\end{align*}
	
		By construction, $$a_k(n) = |A_k(n)|.$$
		
		Let
		\begin{align*}
			A_k'(n) &= \{
			(\lambda = (\lambda_1^{m_1}, \lambda_2^{m_2}, \ldots, \lambda_\ell^{m_\ell}), \lambda_{i_0}, m)
			\mid \\
			& \lambda \in \mathcal{P}(n), \,
			1 \leq i_0 \leq \ell, \,
			m_i < k \, \forall i \not = i_0, \,
			m_{i_0} \ge k, \,
			k \mid \lambda_{i_0}, \,   
			0\le m < k \}.
		\end{align*}		
\end{definition}

For example, continuing with $n=9$ and $k=3$,
\begin{align*}
	A_3(9) &=\{
	((9),9,0),\, ((6,3),6,0),\, ((6,3),3,0), \,
	((6,2,1),6,0), \\
	&\quad\ ((5,3,1),3,0),\,
	((4,3,2),3,0),\,
	((4,3,1^2),3,0), \\
	&\quad\ ((3^2,2,1),3,0),\, ((3^2,2,1),3,1),\,
	((3,2^2,1^2),3,0)
	\}
\end{align*}
and
 \begin{align*}
 A'_3(9) =& \{
 ((3^3),3,0),((3^3),3,1),((3^3),3,2) \}.
 \end{align*}

\begin{definition}	
	Let
		\begin{align*}
			B_{k,0}(n) & = \{
			\lambda = (\lambda_1^{m_1}, \lambda_2^{m_2}, \ldots, \lambda_\ell^{m_\ell}, a^b) \in \mathcal{P}(n)
			\mid
			k \nmid \lambda_i \, \forall i, \,   
			k \mid a, \,   
			k \mid b       
			\},
		\end{align*}
		and
		\begin{align*}
			B_{k,1}(n) & = \{
			\lambda = (\lambda_1^{m_1}, \lambda_2^{m_2}, \ldots, \lambda_\ell^{m_\ell}, a^b) \in \mathcal{P}(n)
			\mid
			k \nmid \lambda_i \, \forall i, \,   
			k \mid a, \,   
			k \nmid b       
			\}.
		\end{align*}
		Note that $\ell$ may be zero above and that $b \geq 1$. By construction, we have for $m=0,1$, $$b_{k,m}(n) = |B_{k,m}(n)|.$$		
\end{definition}

Continuing with $n=9$ and $k=3$, we see that
$$ B_{3,0}(9) = \{(3^3)\} $$
and
\begin{align*}
B_{3,1}(9) &=\{ (9),\, (6,2,1),\, (6,1^3),\, (5,3,1),\, (4,3,2),\, (4,3,1^2),\, \\
	& \\&\quad\ (3^2,2,1),\, (3^2,1^3),\, (3,2^3),\, (3,2^2,1^2),\, (3,2,1^4),\, (3,1^6)\}.
\end{align*}

\begin{definition}
Let
		\begin{align*}
			C_{k,0}(n) & = \{
			\lambda = (\lambda_1^{m_1}, \lambda_2^{m_2}, \ldots, \lambda_\ell^{m_\ell}, a^b) \in \mathcal{P}(n)
			\mid
			m_i < k \, \forall i, 		 \,
			b \geq k, \,
			k \mid a
			\},
		\end{align*}
		and
		\begin{align*}
			C_{k,1}(n) & = \{
			\lambda = (\lambda_1^{m_1}, \lambda_2^{m_2}, \ldots, \lambda_\ell^{m_\ell}, a^b) \in \mathcal{P}(n)
			\mid
			m_i < k \, \forall i, 		 \,
			b \geq k, \,
			k \nmid a
			\}.
		\end{align*}
		Note that $\ell$ may be zero above and that $b \geq 1$. By construction, we have $$c_{k,m}(n) = |C_{k,m}(n)|$$ for $m=0,1$, and $$|A'_k(n)| = kc_{k,0}(n).$$
		
\end{definition}
Again with $n=9$ and $k=3$, we see that
\begin{align*}
C_{3,0}(9) =&\{(3^3)\}
\end{align*}
and
\begin{align*}
C_{3,1}(9) &= \{ (6,1^3),\, (5,1^4),\, (4,2,1^3),\, (4,1^5),\, (3^2,1^3), \, (3,2^3), \\&\quad\  (3,2,1^4),\, (3,1^6),\, (2^4,1), \, (2^2,1^5),\, (2,1^7),\, (1^9)\}.
\end{align*}

In the next sections, we will exhibit bijections between $$B_{k,m}(n) \text{  and  } C_{k,m}(n)$$
for $m=0,1$, and between $$A_k(n) \, \dot\cup \, A'_k(n) \text{  and  }  C_{k,0}(n) \, \dot\cup \, C_{k,1}(n).$$

\subsection{Bijection between $B_{k,m}(n)$ and $C_{k,m}(n)$ for $m=0,1$}

The following bijection is an extension of Glaisher's combinatorial proof \cite{G} of the Euler Partition Identity.

\begin{theorem} \label{B to C}
	Let $$\lambda = (\lambda_1^{m_1}, \lambda_2^{m_2}, \ldots, \lambda_\ell^{m_\ell}, a^b) \in B_{k,m}(n).$$ Expand each $m_i$ in base $k$ as
	$$ m_i = \sum_{j \in S_i} c_{ij} k^j $$
	with $0 < c_{ij} < k$ for $j$ in some index set $S_i$.
	
	Write $\frown$ for sequence concatenation. The mapping
	$$ \lambda \mapsto
	((k^j \lambda_i)^{c_{ij}})_{1 \leq i \leq \ell, \, j \in S_i} \frown (b^a) $$
	gives a bijection between
	$$ B_{k,m}(n) \longleftrightarrow C_{k,m}(n),$$
	for $m=0,1$. (Note that $b$ may also appear once among the $k^j \lambda_i$. In that case, combine those two terms with the same base by adding the corresponding exponents.)
\end{theorem}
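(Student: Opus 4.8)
The plan is to exhibit an explicit inverse map and to verify that the two maps compose to the identity in both directions. The forward map factors naturally into two operations that look independent but in fact interact: a classical Glaisher base-$k$ expansion applied to the parts $\lambda_1,\ldots,\lambda_\ell$ (all of which satisfy $k\nmid\lambda_i$), together with the ``exponent swap'' $a^b\mapsto b^a$ on the distinguished part. The only coupling between the two is the one flagged in the parenthetical remark: the swapped base $b$ may coincide with one of the Glaisher outputs $k^j\lambda_i$, and the multiplicities must then be merged.

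First I would check that the map is well-defined, i.e.\ that its image lies in one of the sets $C_{k,m}(n)$. Weight is preserved, since $\sum_{i,j}c_{ij}k^j\lambda_i=\sum_i m_i\lambda_i$ and the swap preserves $ab$. For the Glaisher part, the standard fact (Glaisher, \cite{G}) is that because each $\lambda_i$ satisfies $k\nmid\lambda_i$ and the $\lambda_i$ are distinct, the values $k^j\lambda_i$ are pairwise distinct and each occurs with multiplicity $c_{ij}<k$; indeed $k^j\lambda_i$ recovers $j$ as its $k$-adic valuation and $\lambda_i$ as its $k$-free part. Since $k\mid a$ and $a\ge 1$ we have $a\ge k$, so the appended base $b$ acquires multiplicity $a$ (or $a+c_{i_0j_0}\ge a$ in the coincidence case), which is $\ge k$, while every other part keeps multiplicity $<k$. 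Hence $b$ is the unique part of multiplicity $\ge k$, so the image lies in a set $C_{k,m}(n)$ with distinguished base $b$; and $k\mid b$ exactly when $m=0$, giving $B_{k,0}(n)\to C_{k,0}(n)$ and $B_{k,1}(n)\to C_{k,1}(n)$.

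The heart of the argument is the inverse. Given $\mu=(\mu_1^{n_1},\ldots,\mu_p^{n_p},A^B)\in C_{k,m}(n)$, where $A$ is the distinguished base of the unique part of multiplicity $B\ge k$, I would write $r=B\bmod k$ and $a=B-r$; note that $a$ is divisible by $k$ with $a\ge k$ (because $B\ge k$), while $0\le r<k$. The claim is that $r$ is precisely the Glaisher multiplicity of $b=A$ absorbed during the swap. Accordingly, I reduce the distinguished part from $A^B$ to $A^r$ (deleting it when $r=0$) to recover the pure Glaisher image $\mu'$, which now has all multiplicities $<k$; apply the inverse Glaisher map to $\mu'$ (collect parts by $k$-free part and reassemble the base-$k$ digits into a single multiplicity) to obtain parts $\lambda_i^{m_i}$ with $k\nmid\lambda_i$; and finally append $a^b$ with $b=A$. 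Because $a$ is divisible by $k$ while no reassembled $\lambda_i$ is, the result lies in $B_{k,m}(n)$ with distinguished part $a^b$.

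The main obstacle is exactly this coincidence case, and it is where the hypotheses $k\mid a$ and $0<c_{ij}<k$ do the real work: the total multiplicity of the distinguished base in the image is $a$ plus the (possibly zero) Glaisher contribution $c_{i_0j_0}<k$, so reducing modulo $k$ unambiguously separates the divisible-by-$k$ summand $a$ from the Glaisher digit $r$. Once this splitting is justified, checking that the two maps are mutually inverse is routine bookkeeping: inverse Glaisher is the genuine inverse of Glaisher on partitions with multiplicities $<k$, the swap $a^b\mapsto b^a$ is visibly involutive, and the merge at $b=A$ going forward and the split $A^B\mapsto A^r$ coming back are inverse operations by the modular argument. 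I would conclude that both injectivity and surjectivity follow, yielding the claimed bijection $B_{k,m}(n)\leftrightarrow C_{k,m}(n)$ for $m=0,1$.
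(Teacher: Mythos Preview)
Your proposal is correct and follows essentially the same approach as the paper: recover $b$ as the unique part of multiplicity $\ge k$, split its multiplicity as $a+r$ with $k\mid a$ and $0\le r<k$, feed the residual $A^r$ back into the inverse Glaisher map, and append $a^b$. Your write-up is somewhat more thorough than the paper's in that you explicitly verify well-definedness (that the image lands in $C_{k,m}(n)$) and spell out why the modular splitting handles the coincidence case, but the argument is the same.
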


\begin{proof}
	For injectivity, first note that $b$ in the statement of the Theorem is uniquely recovered from its image as the only part of the partition on the right hand side with multiplicity at least $k$. The value of $a$ is then uniquely recovered by writing the exponent of $b$ as $a +r$ with $0 \leq r < k$ and $k \mid a$. The $\lambda_i^{m_i}$ are then uniquely recovered by undoing the base $k$ expansion with the remaining terms (including $b^r$ when $r \not = 0$), i.e., extracting the largest power of $k$ dividing each term,  moving it to the exponent, and combining terms with the same base.
	
	Surjectivity is similar. Start with $\nu = (\nu_1^{n_1}, \nu_2^{n_2}, \ldots, \nu_\ell^{n_\ell}, b^\alpha)  \in C_{k,m}(n)$. Write $\alpha$ as $a + r$ with $0 \leq r < k$ and $k \mid a$. The preimage of $\nu$ is constructed via $a$, $b$, and the $\lambda_i^{m_i}$ resulting from undoing the base $k$ expansion on the remaining terms (including $b^r$ when $r \not = 0$) as above.
\end{proof}

\begin{remark}
Our bijection $ B_{2,m}(n) \longleftrightarrow C_{2,m}(n)$ provides an alternative proof of Theorem \ref{Franklin} for the case $j=1$. In particular,
note that this bijection extends canonically to a bijection $B_{2,0}(n) \dot\cup B_{2,1}(n) \longleftrightarrow C_{2,0}(n) \dot\cup C_{2,1}(n)$, where
$B_{2,0}(n) \dot\cup B_{2,1}(n)$ is the collection of all partitions $\lambda$ of $n$ with $\gamma_{\mathcal{O}}(\lambda) = 1$ and
$C_{2,0}(n) \dot\cup C_{2,1}(n)$ is the collection of all partitions $\lambda$ of $n$ with $\gamma_{\mathcal{D}}(\lambda) = 1$.
\end{remark}

\subsection{Bijection between $A_k(n) \, \dot\cup \, A'_k(n)$ and $C_{k,0}(n) \, \dot\cup \, C_{k,1}(n)$}

\begin{theorem}
	Let $$\mu =(\lambda = (\lambda_1^{m_1}, \lambda_2^{m_2}, \ldots, \lambda_\ell^{m_\ell}), \lambda_{i_0}, m) \in A_k(n) \, \dot\cup \, A'_k(n).$$ 	
	Write $\frown$ for sequence concatenation. The mapping
	$$ \mu \mapsto
	(\lambda_i^{m_i})_{i\ne i_0} \frown (\lambda_{i_0}^m, (m_{i_0}-m)^{\lambda_{i_0}}) $$
	gives a bijection between
	$$A_k(n) \, \dot\cup \, A'_k(n) \longleftrightarrow C_{k,0}(n) \, \dot\cup \, C_{k,1}(n).$$
	(Note that $(m_{i_0}-m)$ may also appear once among the $\lambda_i$, $i \not= i_0$. In that case, combine those two terms with the same base by adding the corresponding exponents.)
\end{theorem}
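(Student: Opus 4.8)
The plan is to verify directly that the stated map is a weight-preserving bijection, exhibiting an explicit inverse and checking that the two round-trips are the identity; this is the same strategy used for Theorem \ref{B to C}. The conceptual content of the map is a \emph{partial transposition}: the triple $\mu$ records a part $\lambda_{i_0}$ (with $k\mid\lambda_{i_0}$) of multiplicity $m_{i_0}$ together with a cut $0\le m<m_{i_0}$ (or $0\le m<k$ in the $A'_k$ case); we retain $m$ copies of $\lambda_{i_0}$ and transpose the remaining rectangle $\lambda_{i_0}^{\,m_{i_0}-m}$ into $(m_{i_0}-m)^{\lambda_{i_0}}$. Since $\lambda_{i_0}(m_{i_0}-m)=(m_{i_0}-m)\lambda_{i_0}$, the image has weight $n$, so it is a partition of $n$.

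First I would check that the image lies in $C_{k,0}(n)\,\dot\cup\,C_{k,1}(n)$, i.e.\ that it has exactly one part of multiplicity at least $k$. The transposed part $(m_{i_0}-m)^{\lambda_{i_0}}$ has multiplicity $\lambda_{i_0}\ge k$ (as $k\mid\lambda_{i_0}$), while every retained part has multiplicity below $k$: the parts $\lambda_i^{m_i}$ with $i\ne i_0$ satisfy $m_i<k$ by definition of $A_k(n)$ and $A'_k(n)$, and the retained block $\lambda_{i_0}^{m}$ satisfies $m<m_{i_0}<k$ in the $A_k$ case and $m<k$ in the $A'_k$ case. Combining a colliding base only enlarges the transposed part's multiplicity, so it remains the unique part of multiplicity $\ge k$; its base $m_{i_0}-m$ is divisible by $k$ exactly when the image lands in $C_{k,0}(n)$ and is not divisible by $k$ exactly when it lands in $C_{k,1}(n)$.

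Next I would build the inverse. Given $\nu$ with its unique part $a^b$ of multiplicity $b\ge k$, I would set $\lambda_{i_0}:=k\lfloor b/k\rfloor$ and $r:=b-\lambda_{i_0}\in\{0,\dots,k-1\}$, so that $\lambda_{i_0}$ is the multiple-of-$k$ portion of the multiplicity --- the exact analogue of recovering $a$ from the exponent $\alpha=a+r$ in the proof of Theorem \ref{B to C}. I would then replace $a^b$ by the leftover $a^{r}$ (dropped if $r=0$) together with the un-transposed block $\lambda_{i_0}^{\,a}$, recombine equal bases, read off the resulting multiplicity $m_{i_0}$ of $\lambda_{i_0}$, and put $m:=m_{i_0}-a$. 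One checks $m\ge 0$, that $k\mid\lambda_{i_0}$, and that $m_{i_0}<k$ returns an element of $A_k(n)$ while $m_{i_0}\ge k$ returns an element of $A'_k(n)$; this dichotomy is exactly what matches $A_k\,\dot\cup\,A'_k$ against $C_{k,0}\,\dot\cup\,C_{k,1}$ and, via $|A'_k(n)|=kc_{k,0}(n)$, yields part $(b)$ of the Main Theorem.

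The main obstacle will be the bookkeeping around coincident bases. The harmless coincidence, already flagged in the statement, is $m_{i_0}-m=\lambda_j$ for some $j\ne i_0$; the delicate one is $m_{i_0}-m=\lambda_{i_0}$ (possible only in the $A'_k$ case), where the retained copies of $\lambda_{i_0}$ merge with the transposed block into a single part $\lambda_{i_0}^{\,m+\lambda_{i_0}}$. I expect the verification that the inverse still disentangles these cases to be the heart of the argument: the point is that the decomposition $b=\lambda_{i_0}+r$ with $k\mid\lambda_{i_0}$ and $0\le r<k$ is forced, so the leftover $a^{r}$ is correctly identified as a genuine part when $a\ne\lambda_{i_0}$ and correctly reabsorbed into the base $\lambda_{i_0}$ when $a=\lambda_{i_0}$, in each case reproducing the original cut $m$. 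Verifying that both compositions are the identity then completes the proof.
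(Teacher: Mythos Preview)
Your proposal is correct and follows essentially the same route as the paper: the paper also recovers $\lambda_{i_0}$ from the unique large exponent by writing it as $\lambda_{i_0}+r$ with $k\mid\lambda_{i_0}$ and $0\le r<k$, reads off $m$ as the exponent of $\lambda_{i_0}$, sets $m_{i_0}$ by adding the base of the large-exponent term, and obtains the remaining $\lambda_i^{m_i}$ by deletion. Your write-up is in fact more thorough than the paper's about the collision bookkeeping --- in particular the case $m_{i_0}-m=\lambda_{i_0}$, which the paper leaves implicit --- so nothing needs to change.
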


\begin{proof}
	For injectivity, first note that the $\lambda_{i_0}$ in the statement of the theorem is uniquely recovered from its image by writing the unique exponent that is at least $k$ as $\lambda_{i_0} +r$ where $0 \leq r < k$ and $k \mid \lambda_{i_0}$. Then $m$ is recovered uniquely as the exponent of $\lambda_{i_0}$ (so as $0$ if $\lambda_{i_0}$ does not appear). From that, $m_{i_0}$ is obtained from the base of the $\lambda_{i_0} +r$ exponent by adding $m$. After that, removing $\lambda_{i_0}^m$ and $(m_{i_0}-m)^{\lambda_{i_0}}$ from the partition determines the remaining $\lambda_i^{m_i}$, $i\ne i_0$.
	
	Surjectivity is similar. Start with $\nu = (\nu_1^{n_1}, \nu_2^{n_2}, \ldots, \nu_\ell^{n_\ell}, b^a)  \in C_{k,0}(n) \, \dot\cup \, C_{k,1}(n)$. To construct its preimage, write $a$ as $\lambda_{i_0} +r$ with $0 \leq r < k$ and $k \mid \lambda_{i_0}$. Let $m$ be the exponent of $\lambda_{i_0}$, and let $m_{i_0}$ be $b + m$. The rest of the $\lambda_i^{m_i}$, $i \not= i_0$, are chosen to be the remaining terms after removing $\lambda_{i_0}^m$ and $(m_{i_0}-m)^{\lambda_{i_0}}$ from the partition.	
\end{proof}

\subsection{Proof of Main Result}

\begin{proof}[\mbox{Combinatorial proof of Theorem \ref{main}}]
	This is immediate as we now have
	\begin{align*}
		b_{k,m}(n) = |B_{k,m}(n)| = |C_{k,m}(n)| = c_{k,m}(n),
	\end{align*}
while
	\begin{align*}
		a_k(n) + kc_{k,0}(n)
		= \, \mid A_k(n) \, \dot\cup \, A'_k(n) \mid
		= \, \mid C_{k,0}(n) \, \dot\cup \, C_{k,1}(n) \mid
		= c_{k,0}(n) + c_{k,1}(n),
	\end{align*}
gives
	\begin{align*}& a_k(n) = c_{k,1}(n) - (k-1)c_{k,0}(n). \qedhere \end{align*}
\end{proof}

\section{Connections to Andrews, Merca}

\begin{theorem} \label{AM}
	Comparing our notation to that of Andrews, Merca \cite{AM}, we have the following correlations.
	\begin{itemize}
		\item[$(a)$] $a(n) =a_2(n)$, $c_e(n) = c_{2,0}(n)$ and $c_o(n) = c_{2,1}(n)$.
		\item[$(b)$] $  b_e(n) =
		\begin{cases}
			b_{2,0}(n) & \text{if $n$ is even,}\\
			b_{2,1}(n) & \text{if $n$ is odd,}
		\end{cases} $
		and
		$  b_o(n) =
		\begin{cases}
			b_{2,1}(n) & \text{if $n$ is even,}\\
			b_{2,0}(n) & \text{if $n$ is odd.}
		\end{cases} $
	\end{itemize}
\end{theorem}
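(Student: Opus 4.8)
The plan is to handle the two parts separately, since only part (b) carries any real content. Part (a) will follow directly from matching definitions, with no bijection required, while part (b) rests on a single parity computation linking the multiplicity of the unique even base to the total number of parts.

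For part (a), I would first note that for $k=2$ a partition has every multiplicity strictly less than $k$ exactly when it is a partition into distinct parts, and a part is divisible by $k=2$ exactly when it is even. Hence $a_2(n)$ counts the even parts across all partitions of $n$ into distinct parts, which is precisely the quantity $a(n)$ of \cite{AM}. For the $c$-sequences I would compare the defining conditions verbatim: $c_{2,0}(n)$ (resp.\ $c_{2,1}(n)$) counts the $\lambda\in\mathcal{P}(n)$ having exactly one part of multiplicity at least $2$ --- i.e.\ exactly one repeated part --- whose base is even (resp.\ odd). This is word-for-word the Andrews--Merca definition of $c_e(n)$ (resp.\ $c_o(n)$), so the equalities are immediate.

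For part (b), the key observation is as follows. Fix a partition $\lambda$ counted by $B_{2,0}(n)\,\dot\cup\,B_{2,1}(n)$, i.e.\ one whose set of even parts is a single value $a$ occurring with multiplicity $m$, all remaining parts being odd. Since $a$ is even, the even parts contribute an even amount to $n$, so $n$ has the same parity as the sum of the odd parts; and a sum of $t$ odd numbers is congruent to $t\pmod 2$. Writing $t$ for the number of odd parts counted with multiplicity, this gives $t\equiv n\pmod 2$, whence the total number of parts is $m+t\equiv m+n\pmod 2$. Thus $\lambda$ has an even number of parts iff $m\equiv n\pmod 2$, and an odd number of parts iff $m\not\equiv n\pmod 2$. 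I would then read off the four cases, recalling that $b_{2,0}(n)$ (resp.\ $b_{2,1}(n)$) counts these partitions with $m$ even (resp.\ odd), while $b_e(n)$ (resp.\ $b_o(n)$) counts those with an even (resp.\ odd) number of parts: when $n$ is even an even number of parts corresponds to $m$ even, giving $b_e(n)=b_{2,0}(n)$ and $b_o(n)=b_{2,1}(n)$, and when $n$ is odd the correspondence flips, giving $b_e(n)=b_{2,1}(n)$ and $b_o(n)=b_{2,0}(n)$, exactly as claimed.

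The main obstacle --- a mild one --- is purely bookkeeping: I must transcribe the Andrews--Merca definitions correctly, in particular keeping straight that $b_e,b_o$ are indexed by the parity of the \emph{number of parts} whereas $b_{2,0},b_{2,1}$ are indexed by the parity of the \emph{multiplicity of the even base}, and then apply the single parity identity $m+t\equiv m+n\pmod 2$ with the correct orientation in each of the four cases. No bijection or generating-function input is needed beyond the definitions already in place.
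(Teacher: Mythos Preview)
Your proposal is correct and mirrors the paper's own proof: part~(a) is handled by direct comparison of definitions, and part~(b) by the same parity argument (the paper phrases it as ``if $2\mid b$ then there must be an even number of odd parts,'' which is exactly your computation $t\equiv n\pmod 2$ combined with total${}=m+t$). Your write-up is in fact somewhat more explicit than the paper's in spelling out the parity identity and the four resulting cases.
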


\begin{proof}
	$(a)$ follows trivially from the definitions of $a_k(n)$ and $c_{k,m}(n)$. For instance, by definition $a(n)$ and $a_2(n)$ both count the number of even parts in all partitions of $n$ into distinct parts. For $(b)$, let
$\lambda = (\lambda_1^{m_1}, \lambda_2^{m_2}, \ldots, \lambda_\ell^{m_\ell}, a^b)$ be an element of $B_{k,0}(n) \, \dot\cup \, B_{k,1}(n)$. If we take $n$ to be even, and if $2 \mid b$, then there must be an even number of odd parts in $\lambda$, and if $2 \nmid b$, then there must be an odd number of odd parts in $\lambda$. So, $b_{2,0}(n) = b_e(n)$ and $b_{2,1} = b_o(n)$. For $n$ odd, we argue similarly.
\end{proof}

From this, \cite[Theorem 1.4]{AM} and \cite[Theorem 1.5]{AM} become special cases of Theorem \ref{main} for $k=2$.

\section{The Generating Function for $a_k(n)$} \label{sec:gen fun}

For $k \ge 2$, the following calculation of formal power series provides the generating function of the sequence $a_k(n)$.
\begin{align*}
	& \sum_{n=0}^\infty a_k(n)q^n\\
	& = \frac{d}{dz} \bigg[
	\prod_{\substack{n=1\\ k \nmid n}}^{\infty} (1\!+\!q^n\!+\!(q^n)^2\!+\dots+\!(q^n)^{k-1} )
	\prod_{\substack{n=1\\ k \mid n}}^{\infty} (1\!+\!zq^n\!+\!(zq^n)^2\!+\dots+\!(zq^n)^{k-1})
	\bigg]_{\!z=1}\\
	& =
	\prod_{\substack{n=1\\ k \nmid n}}^{\infty} \frac{1-q^{kn}}{1-q^n}\cdot
	\frac{d}{dz} \bigg[
	\prod_{\substack{n=1\\ k \mid n}}^{\infty} \frac{1-z^kq^{kn}}{1-zq^n}
	\bigg]_{\!z=1}\\
	& =
	\prod_{\substack{n=1\\ k \nmid n}}^{\infty} \frac{1-q^{kn}}{1-q^n}\cdot
	\bigg[
	\prod_{\substack{n=1\\ k \mid n}}^{\infty} \frac{1-z^kq^{kn}}{1-zq^n} \cdot
	\sum_{\substack{n=1\\ k \mid n}}^{\infty} \frac{q^n\!+\!2q^n(zq^n)\!+\dots+\!(k-1)q^n(zq^n)^{k-2}}{1\!+\!zq^n\!+\!(zq^n)^2\!+\dots+\!(zq^n)^{k-1}}
	\bigg]_{\!z=1}\\
	& =
	\prod_{n=1}^{\infty} \frac{1-q^{kn}}{1-q^n}\cdot
	\sum_{n=1}^{\infty}
	\frac{q^{kn} + 2q^{2kn} +\dots+ (k-1)q^{(k-1)kn}}{1 + q^{kn} + q^{2kn} +\dots+ q^{(k-1)kn}}
\end{align*}

Table \ref{table:akn} lists $a_k(n)$ for some small values of $n$ and $k$. The sequence $a_2(n)$ is documented on OEIS \cite[A116680]{OEIS}.

\begin{table}[h]	
	\centering
	\scalebox{0.8}{
		\begin{tabular}{c|c|c|c|c|c|c|c|c|}
			\cline{2-8}
			& $k=2$ & $k=3$ & $k=4$ & $k=5$ & $k=6$ & $k=7$ & $k=8$ \\ \hline
			\multicolumn{1}{|l|}{$n=0$}  & $0$ & $0$ & $0$ & $0$  & $0$ & $0$ & $0$\\ \hline
			\multicolumn{1}{|l|}{$n=1$}  & $0$ & $0$ & $0$ & $0$  & $0$ & $0$ & $0$\\ \hline
			\multicolumn{1}{|l|}{$n=2$}  & $1$ & $0$ & $0$ & $0$ & $0$ & $0$ & $0$\\ \hline
			\multicolumn{1}{|l|}{$n=3$}  & $1$ & $1$ & $0$ & $0$ & $0$ & $0$ & $0$\\ \hline
			\multicolumn{1}{|l|}{$n=4$}  & $1$ & $1$ & $1$ & $0$ & $0$ & $0$ & $0$\\ \hline
			\multicolumn{1}{|l|}{$n=5$}  & $2$ & $2$ & $1$ & $1$ & $0$ & $0$ & $0$\\ \hline
			\multicolumn{1}{|l|}{$n=6$}  & $4$ & $4$ & $2$ & $1$ & $1$ & $0$ & $0$\\ \hline
			\multicolumn{1}{|l|}{$n=7$}  & $5$ & $6$ & $3$ & $2$ & $1$ & $1$ & $0$\\ \hline
			\multicolumn{1}{|l|}{$n=8$}  & $5$ & $9$ & $6$ & $3$ & $2$ & $1$ & $1$\\ \hline
			\multicolumn{1}{|l|}{$n=9$}  & $8$ & $10$ & $8$ & $5$ & $3$ & $2$ & $1$\\ \hline
			\multicolumn{1}{|l|}{$n=10$} & $11$ & $16$ & $13$ & $8$ & $5$ & $3$ & $2$\\ \hline
			\multicolumn{1}{|l|}{$n=11$} & $14$ & $21$ & $18$ & $12$ & $7$ & $5$ & $3$\\ \hline
			\multicolumn{1}{|l|}{$n=12$} & $18$ & $31$ & $26$ & $17$ & $12$ & $7$ & $5$\\ \hline
			\multicolumn{1}{|l|}{$n=13$} & $23$ & $39$ & $36$ & $25$ & $16$ & $11$ & $7$\\ \hline
			\multicolumn{1}{|l|}{$n=14$} & $29$ & $54$ & $51$ & $35$ & $24$ & $16$ & $11$\\ \hline
			\multicolumn{1}{|l|}{$n=15$} & $37$ & $69$ & $68$ & $48$ & $33$ & $23$ & $15$\\ \hline
		\end{tabular}
	}	
	\caption{Small Values of $a_k(n)$}
	\label{table:akn}
\end{table}

\begin{remark}
	It can easily be verified from the definitions that
	$$
	a_k(n) =
	\begin{cases}
		 0 & \text{for $1\le n <k$},\\
		 p(n-k) & \mbox{for $k\le n < 2k$},\\
		 p(k) + 1 & \mbox{for $n = 2k$}
	\end{cases}
	$$
	for all $k\ge 3$, where $p(n)$ denotes Euler's partition function.
	These patterns are manifest in Table \ref{table:akn}.
\end{remark}

\section{Recurrence Relations for $a_k(n)$} \label{sec: recur reln}

One deduces the following two recurrence relations for $a_k(n)$ from the generating function, from which setting $k=2$ recovers those found in \cite{AM}. Note that these recurrence relations are also satisfied by $p(n)$.

\begin{theorem}\label{t5.2}
For $k \nmid n$, the sequence $a_k(n)$ satisfies the generalized pentagonal number recurrence relation
\begin{align*}
a_k(n)&=a_k(n-1)+a_k(n-2)-a_k(n-5)-a_k(n-7)\\
&\quad\, +a_k(n-12)+a_k(n-15)-a_k(n-22)-a_k(n-26)+\cdots.
\end{align*}
\end{theorem}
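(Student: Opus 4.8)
The plan is to combine the generating function for $a_k(n)$ with Euler's pentagonal number theorem. Set $F(q) = \sum_{n=0}^\infty a_k(n)q^n$, and write the generating function from the previous section as
$$F(q) = \frac{\prod_{n=1}^\infty (1-q^{kn})}{\prod_{n=1}^\infty (1-q^n)} \cdot G(q),$$
where $G(q)$ denotes the summation factor $\sum_{n=1}^\infty \frac{q^{kn}+2q^{2kn}+\cdots+(k-1)q^{(k-1)kn}}{1+q^{kn}+q^{2kn}+\cdots+q^{(k-1)kn}}$ appearing there. The first step is to multiply through by the Euler product $\prod_{m=1}^\infty(1-q^m)$; this cancels the denominator of $F(q)$ and leaves
$$F(q)\cdot\prod_{m=1}^\infty (1-q^m) = \prod_{n=1}^\infty (1-q^{kn})\cdot G(q).$$

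The key observation, which is the heart of the argument, is that the right-hand side is a formal power series in $q^k$: the product $\prod_{n=1}^\infty(1-q^{kn})$ is visibly a series in $q^k$, and every summand of $G(q)$ has numerator and denominator built entirely from powers of $q^{kn}$, so $G(q)$ is as well. Therefore the coefficient of $q^n$ on the right-hand side vanishes whenever $k \nmid n$.

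Next I would expand the left-hand side using the pentagonal number theorem,
$$\prod_{m=1}^\infty(1-q^m) = \sum_{j=-\infty}^\infty (-1)^j q^{j(3j-1)/2} = 1 - q - q^2 + q^5 + q^7 - q^{12} - q^{15} + \cdots.$$
Extracting the coefficient of $q^n$ from the left-hand product produces the alternating sum $a_k(n) - a_k(n-1) - a_k(n-2) + a_k(n-5) + a_k(n-7) - a_k(n-12) - \cdots$, indexed by the generalized pentagonal numbers $j(3j-1)/2$ with signs $(-1)^j$. Equating this to zero for $k \nmid n$ and moving every term but $a_k(n)$ to the other side yields precisely the stated recurrence.

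The cancellation of the Euler product against the denominator and the bookkeeping of signs and pentagonal offsets are routine and present no real difficulty. The one genuine subtlety, and the reason the hypothesis $k \nmid n$ is essential, is the power-series-in-$q^k$ claim: when $k \mid n$ the right-hand coefficient need not vanish, so the identity acquires a nonzero correction term and fails in general. I would accordingly take care to state and justify the $q^k$-series structure precisely, since it is exactly what confines the recurrence to the indices with $k \nmid n$.
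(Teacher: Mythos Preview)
Your proposal is correct and follows essentially the same argument as the paper: multiply the generating function by $(q;q)_\infty$, observe that the resulting right-hand side is a power series in $q^k$, and apply Euler's pentagonal number theorem on the left to extract the vanishing coefficient identity for $k\nmid n$. The paper's proof is nearly identical in structure and emphasis, including the explicit remark that the $q^k$-series observation is what forces the restriction $k\nmid n$.
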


\begin{proof}
From the generating function for $a_k(n)$, one can write
\begin{align*}
(q;q)_\infty\sum_{n=0}^\infty a_k(n)q^n=(q^k;q^k)_\infty\sum_{n=1}^{\infty}
	\frac{q^{kn} + 2q^{2kn} +\dots+ (k-1)q^{(k-1)kn}}{1 + q^{kn} + q^{2kn} +\dots+ q^{(k-1)kn}},
\end{align*}
where $(a;q)_\infty$ represents the \emph{$q$-series} or \emph{$q$-Pochhammer symbol}
\begin{align*}
(a;q)_\infty=\prod_{n=0}^\infty(1-aq^n).
\end{align*}
Substituting Euler's pentagonal number theorem \cite[Corollary 1.7]{partitions} on the l.h.s.,
\begin{align*}
(q;q)_\infty=\sum_{n=-\infty}^{\infty} (-1)^nq^{n(3n-1)/2},
\end{align*}
and noting that the r.h.s. is a power series in $q^k$ leads to the recurrence relation. Namely, the coefficients of all $q^n,\ k \nmid n,$ are equal to zero, hence it quickly follows that
\begin{align*}
0&=a_k(n)-a_k(n-1)-a_k(n-2)+a_k(n-5)+a_k(n-7)-\cdots.\qedhere
\end{align*}
\end{proof}

\begin{theorem}\label{t5.3}
For $k$ even, $n$ odd, the sequence $a_k(n)$ satisfies the triangular number recurrence relation
\begin{align*}
a_k(n)&=a_k(n-1)+a_k(n-3)-a_k(n-6)-a_k(n-10)\\
&\quad\, +a_k(n-15)+a_k(n-21)-a_k(n-28)-a_k(n-36)+\cdots.
\end{align*}
\end{theorem}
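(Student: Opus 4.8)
The plan is to run the same generating-function argument used for Theorem \ref{t5.2}, but with Euler's pentagonal number theorem replaced by Gauss's triangular number identity, and with the parity hypotheses ($k$ even, $n$ odd) doing the work that $k \nmid n$ did before. Throughout I would abbreviate the series in $q^k$ coming from the generating function as
$$G_k(q^k) = \sum_{n=1}^\infty \frac{q^{kn}+2q^{2kn}+\cdots+(k-1)q^{(k-1)kn}}{1+q^{kn}+\cdots+q^{(k-1)kn}},$$
so that $\sum_{n\ge0} a_k(n)q^n = \tfrac{(q^k;q^k)_\infty}{(q;q)_\infty}G_k(q^k)$.

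First I would introduce the multiplier whose exponents are the triangular numbers,
$$f(q) = \sum_{m=0}^\infty (-1)^{m(m+1)/2} q^{m(m+1)/2} = 1 - q - q^3 + q^6 + q^{10} - q^{15} - \cdots,$$
since its sign pattern $+,-,-,+,+,\ldots$ is precisely the one encoded by the claimed recurrence. Substituting $q \mapsto -q$ into Gauss's identity $\sum_{m\ge0} q^{m(m+1)/2} = \tfrac{(q^2;q^2)_\infty}{(q;q^2)_\infty}$ yields the closed form $f(q) = \tfrac{(q^2;q^2)_\infty}{(-q;q^2)_\infty}$.

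Next I would form $f(q)\sum_{n\ge0} a_k(n)q^n$ and simplify the $q$-products. Splitting $(q;q)_\infty = (q;q^2)_\infty(q^2;q^2)_\infty$ and using $(-q;q^2)_\infty(q;q^2)_\infty = (q^2;q^4)_\infty$, the products collapse to
$$f(q)\sum_{n=0}^\infty a_k(n)q^n = \frac{(q^k;q^k)_\infty}{(q^2;q^4)_\infty}\,G_k(q^k).$$
Because $k$ is even, both $(q^k;q^k)_\infty$ and $G_k(q^k)$ are power series in $q^2$, as is $(q^2;q^4)_\infty^{-1}$; hence the entire right-hand side is a power series in $q^2$ and carries no odd-degree terms.

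Finally I would compare the coefficient of $q^n$ for $n$ odd. It vanishes on the right, while on the left it equals $\sum_m (-1)^{m(m+1)/2} a_k\!\left(n - \tfrac{m(m+1)}{2}\right)$, giving
$$0 = a_k(n) - a_k(n-1) - a_k(n-3) + a_k(n-6) + a_k(n-10) - \cdots,$$
which rearranges into the stated triangular recurrence. The main obstacle is spotting the correct multiplier $f(q)$ and checking that the product collapses to an even series; once Gauss's identity and the factorization $(-q;q^2)_\infty(q;q^2)_\infty = (q^2;q^4)_\infty$ are in hand, the remaining parity bookkeeping is routine. Both hypotheses are genuinely needed: $k$ even is what makes $(q^k;q^k)_\infty G_k(q^k)$ a power series in $q^2$, and $n$ odd is exactly the range in which an even series forces the coefficient to be zero.
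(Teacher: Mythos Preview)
Your argument is correct and is essentially the paper's own proof: your multiplier $f(q)=\dfrac{(q^2;q^2)_\infty}{(-q;q^2)_\infty}$ is exactly the paper's $(q;q)_\infty(-q^2;q^2)_\infty$ (this equality is quoted there as the theta identity), and your right-hand side $\dfrac{(q^k;q^k)_\infty}{(q^2;q^4)_\infty}G_k(q^k)$ coincides with the paper's $(-q^2;q^2)_\infty(q^k;q^k)_\infty G_k(q^k)$ via Euler's identity $(-q^2;q^2)_\infty=1/(q^2;q^4)_\infty$. The only difference is cosmetic: the paper multiplies the already-cleared identity by $(-q^2;q^2)_\infty$, whereas you multiply the generating function directly by $f(q)$ and then simplify.
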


\begin{proof}
From the generating function for $a_k(n)$, one can further write
\begin{align*}
(q;q)_\infty&(-q^2;q^2)_\infty  \sum_{n=0}^\infty a_k(n)q^n\\
&=(-q^{2};q^{2})_\infty(q^{k};q^{k})_\infty\sum_{n=1}^{\infty}
	\frac{q^{kn} + 2q^{2kn} +\dots+ (k-1)q^{(k-1)kn}}{1 + q^{kn} + q^{2kn} +\dots+ q^{(k-1)kn}}.
\end{align*}
Substituting on the l.h.s. the theta identity \cite[Equation (2.2.13)]{partitions},
\begin{align*}
(q;q)_\infty(-q^2;q^2)_\infty=\dfrac{(q^2;q^2)_\infty}{(-q;q^2)_\infty}=\sum_{n=0}^\infty (-q)^{n(n+1)/2},
\end{align*}
and noting that the r.h.s. is a power series in $q^2$ leads to the recurrence relation. Namely, the coefficients of all $q^n,$ $k$ even, $n$ odd, are equal to zero, hence it quickly follows that
\begin{align*}
0&=a_k(n)-a_k(n-1)-a_k(n-3)+a_k(n-6)+a_k(n-10)-\cdots.\qedhere
\end{align*}
\end{proof}

We include another particularly nice set of recurrence relations for $a_k(n)$ which involves square powers and their doubles. Once again, these recurrence relations are shared by $p(n)$ (see \cite[Theorem 1]{CKS}).

\begin{theorem}\label{t6.3}
For $k$ even, $n$ odd, the sequence $a_k(n)$ satisfies the square power recurrence relation
\begin{align*}
a_k(n)=\sum _{m=1}^{\left\lfloor \sqrt{n}\right\rfloor } (-1)^{m-1}a_k(n-m^2)+\sum _{m=1}^{\left\lfloor \sqrt{\frac{n}{2}}\right\rfloor } (-1)^{m-1}a_k(n-2m^2).
\end{align*}
Thus
\begin{align*}
a_k(n)&=a_k(n-1)+a_k(n-2)-a_k(n-4)-a_k(n-8)\\
&\quad\, +a_k(n-9)+a_k(n-18)-a_k(n-16)-a_k(n-32)+\cdots.
\end{align*}
\end{theorem}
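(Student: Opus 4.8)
The plan is to follow the template of Theorems~\ref{t5.2} and~\ref{t5.3}. Starting from the identity
$$(q;q)_\infty\sum_{n=0}^\infty a_k(n)q^n=(q^k;q^k)_\infty\,S_k(q),\qquad S_k(q):=\sum_{n=1}^{\infty}\frac{q^{kn}+2q^{2kn}+\dots+(k-1)q^{(k-1)kn}}{1+q^{kn}+q^{2kn}+\dots+q^{(k-1)kn}},$$
which is the one used in the proof of Theorem~\ref{t5.2}, I would multiply through by the Gauss-type theta factor $\vartheta(q)+\vartheta(q^2)$, where $\vartheta(q):=\sum_{n=-\infty}^\infty(-1)^nq^{n^2}=1+2\sum_{m\ge1}(-1)^mq^{m^2}$. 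Since $\vartheta(q)+\vartheta(q^2)=2+2\sum_{m\ge1}(-1)^m\big(q^{m^2}+q^{2m^2}\big)$, its nonzero coefficients sit exactly at the squares $m^2$ and doubled squares $2m^2$ with the signs appearing in the asserted recurrence, so this is the correct multiplier.

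The crux is the $q$-series identity
$$\frac{\vartheta(q)+\vartheta(q^2)}{(q;q)_\infty}=(q;q^2)_\infty+(-q;q^2)_\infty,$$
which I would derive from standard product evaluations: $\vartheta(q)=(q;q^2)_\infty^2(q^2;q^2)_\infty$ and $(q;q)_\infty=(q;q^2)_\infty(q^2;q^2)_\infty$ give $\vartheta(q)/(q;q)_\infty=(q;q^2)_\infty$, while the companion computation yields $\vartheta(q^2)/(q;q)_\infty=(q^2;q^4)_\infty/(q;q^2)_\infty=(-q;q^2)_\infty$, the last step using $(q;q^2)_\infty(-q;q^2)_\infty=(q^2;q^4)_\infty$; see~\cite{partitions}. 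The right-hand side is then $f(q)+f(-q)$ with $f(q)=(q;q^2)_\infty=\prod_{j\ge1}(1-q^{2j-1})$, because $q\mapsto-q$ carries $(q;q^2)_\infty$ to $(-q;q^2)_\infty$, so it is invariant under $q\mapsto-q$, i.e., a power series in $q^2$. This evenness is precisely the square-power recurrence for $p(n)$ recorded in~\cite{CKS}, and I expect it to be the main obstacle: each of $(q;q^2)_\infty$ and $(-q;q^2)_\infty$ genuinely carries odd-degree terms, and only in their sum do the odd parts cancel.

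With the identity in hand the conclusion is immediate. Multiplying the generating-function identity by $\vartheta(q)+\vartheta(q^2)$ gives
$$\big[\vartheta(q)+\vartheta(q^2)\big]\sum_{n=0}^\infty a_k(n)q^n=\big[(q;q^2)_\infty+(-q;q^2)_\infty\big]\,(q^k;q^k)_\infty\,S_k(q).$$
For $k$ even, both $(q^k;q^k)_\infty$ and $S_k(q)$ are power series in $q^2$, and by the crux identity so is the bracketed factor; hence the whole right-hand side is a power series in $q^2$, and its coefficient of $q^n$ vanishes for every odd $n$. Reading off the coefficient of $q^n$ on the left via $\vartheta(q)+\vartheta(q^2)=2+2\sum_{m\ge1}(-1)^m(q^{m^2}+q^{2m^2})$ produces $2\big[a_k(n)-\sum_{m\ge1}(-1)^{m-1}a_k(n-m^2)-\sum_{m\ge1}(-1)^{m-1}a_k(n-2m^2)\big]$; setting this to zero for odd $n$ and dividing by $2$ yields the stated recurrence, the bounds $\lfloor\sqrt n\rfloor$ and $\lfloor\sqrt{n/2}\rfloor$ arising because $a_k(n-m^2)=0$ once $m^2>n$. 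This final bookkeeping is the same as that closing the proofs of Theorems~\ref{t5.2} and~\ref{t5.3}.
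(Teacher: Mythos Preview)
Your argument is correct and is essentially the paper's own proof, just packaged in one step rather than two. The paper multiplies the generating function separately by $(q;q^2)_\infty$ and by $(-q;q^2)_\infty$ to obtain series $f_k(q)$ and $g_k(q)$, observes (via the same even-in-$q$ property of $(q^k;q^k)_\infty S_k(q)$ for even $k$) that $g_k(q)=f_k(-q)$, and then adds; you instead multiply at once by $\vartheta(q)+\vartheta(q^2)$ and use your crux identity to see the result equals $f_k(q)+g_k(q)$, which is even for the same reason. The product identities you invoke for $\vartheta(q)/(q;q)_\infty$ and $\vartheta(q^2)/(q;q)_\infty$ are exactly the Gauss square-power identities the paper cites, so the two arguments coincide at every substantive step.
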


\begin{proof}
From the generating function for $a_k(n)$ we define the series
\begin{align} \label{eq1}
&f_k(q) = (q;q)_\infty (q;q^2)_\infty  \sum_{n=0}^\infty a_k(n)q^n\\ \notag
&\qquad\ =(q;q^2)_\infty (q^{k};q^{k})_\infty\sum_{n=1}^{\infty}
	\frac{q^{kn} + 2q^{2kn} +\dots+ (k-1)q^{(k-1)kn}}{1 + q^{kn} + q^{2kn} +\dots+ q^{(k-1)kn}}.
\end{align}
Using Euler's identity
\begin{align*}
(-q;q)_\infty=\dfrac{1}{(q;q^2)_\infty},
\end{align*}
one substitutes Gauss's square power identity \cite[Equation (2.2.12)]{partitions} on the l.h.s,
\begin{align*}
(q;q)_\infty(q;q^2)_\infty =\dfrac{(q;q)_\infty}{(-q;q)_\infty}=\sum_{m=-\infty}^\infty (-1)^m q^{m^2}.
\end{align*}
Thus, the coefficient of $q^n$ of the series $f_k(q)$ is given by
\begin{align} \label{eq2}
\sum_{m=-\infty}^\infty (-1)^m a_k(n-m^2) = a(n) - 2\sum_{m= 1}^{\left\lfloor \sqrt{n}\right\rfloor} (-1)^{m-1} a_k(n-m^2).
\end{align}

Next we consider the series
\begin{align} \label{eq3}
&g_k(q) = (q;q)_\infty (-q;q^2)_\infty  \sum_{n=0}^\infty a_k(n)q^n\\ \notag
&\qquad\ =(-q;q^2)_\infty (q^{k};q^{k})_\infty\sum_{n=1}^{\infty}
	\frac{q^{kn} + 2q^{2kn} +\dots+ (k-1)q^{(k-1)kn}}{1 + q^{kn} + q^{2kn} +\dots+ q^{(k-1)kn}}.
\end{align}
Using once more Gauss's square power identity, on the l.h.s we can substitute
\begin{align*}
(q;q)_\infty(-q;q^2)_\infty =\dfrac{(q^2;q^2)_\infty}{(-q^2;q^2)_\infty}=\sum_{m=-\infty}^\infty (-1)^m (q^2)^{m^2}=\sum_{m=-\infty}^\infty (-1)^m q^{2m^2},
\end{align*}
and the coefficient of $q^n$ of the series $g_k(q)$ is given by
\begin{align} \label{eq4}
\sum_{m=-\infty}^\infty (-1)^m a_k(n-2m^2) = a(n) - 2\sum_{m= 1}^{\left\lfloor \sqrt{\frac{n}{2}}\right\rfloor} (-1)^{m-1} a_k(n-2m^2).
\end{align}
Comparing the r.h.s of Equations \eqref{eq1} and \eqref{eq3}, we see $g_k(q) = f_k(-q)$. Hence, as $n$ is odd,
the expressions in Equations \eqref{eq2} and \eqref{eq4} differ only by a sign. In particular, adding gives
\begin{align*}
2a(n) - 2\sum_{m= 1}^{\left\lfloor \sqrt{n}\right\rfloor} (-1)^{m-1} a_k(n-m^2) - 2\sum_{m= 1}^{\left\lfloor \sqrt{\frac{n}{2}}\right\rfloor} (-1)^{m-1} a_k(n-2m^2) = 0,
\end{align*}
and the desired recurrence follows.
\end{proof}

Theorem \ref{t6.3} has the following surprising counterpart for the case $k$ and $n$ even.

\begin{theorem}\label{t6.4}
For $k$ and $n$ both even, we have the identity
\begin{align*}
\sum _{m=1}^{\left\lfloor \sqrt{n}\right\rfloor } (-1)^{m-1}a_k(n-m^2) = \sum _{m=1}^{\left\lfloor \sqrt{\frac{n}{2}}\right\rfloor } (-1)^{m-1}a_k(n-2m^2).
\end{align*}
\end{theorem}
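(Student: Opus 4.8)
The plan is to recycle the entire apparatus from the proof of Theorem \ref{t6.3}, changing only the parity of $n$ at the final step. I would recall the two series $f_k(q)$ and $g_k(q)$ introduced in \eqref{eq1} and \eqref{eq3}, whose $q^n$-coefficients are recorded in \eqref{eq2} and \eqref{eq4}, respectively. The crucial structural fact, already established there, is the relation $g_k(q) = f_k(-q)$: since $k$ is even, the factor $(q;q^2)_\infty$ is carried to $(-q;q^2)_\infty$ under $q \mapsto -q$, while both $(q^k;q^k)_\infty$ and the rational sum $\sum_{n\ge 1}(q^{kn}+\cdots)/(1+\cdots)$ involve only powers of $q$ that are multiples of the even number $k$, and hence are left invariant.

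From $g_k(q) = f_k(-q)$ I obtain $[q^n]\,g_k(q) = (-1)^n\,[q^n]\,f_k(q)$ for every $n$. In the proof of Theorem \ref{t6.3} this was applied with $n$ odd, so that $(-1)^n = -1$ forced the two coefficient expressions \eqref{eq2} and \eqref{eq4} to be negatives of one another; adding them then produced the square power recurrence. Here I would instead take $n$ even, whence $(-1)^n = 1$ and the two expressions coincide:
\begin{equation*}
a_k(n) - 2\sum_{m=1}^{\left\lfloor \sqrt{n}\right\rfloor}(-1)^{m-1}a_k(n-m^2)
= a_k(n) - 2\sum_{m=1}^{\left\lfloor \sqrt{\frac{n}{2}}\right\rfloor}(-1)^{m-1}a_k(n-2m^2).
\end{equation*}
Cancelling the common term $a_k(n)$ and dividing by $-2$ yields precisely the claimed identity.

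There is no genuine obstacle to overcome: the entire content was already assembled in the odd case, and the even case differs only in that the relation $[q^n]\,g_k = (-1)^n[q^n]\,f_k$ now produces an equality of coefficients rather than an opposition of signs. The only point demanding care is the sign bookkeeping, namely recognizing that the odd case combines \eqref{eq2} and \eqref{eq4} by addition (because they differ by a sign), whereas the even case combines them by direct equation (because they agree). Once that dichotomy is noted, the identity drops out with no further computation.
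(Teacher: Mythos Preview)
Your proposal is correct and follows essentially the same approach as the paper: both recycle the series $f_k(q)$ and $g_k(q)$ from Theorem~\ref{t6.3}, use $g_k(q)=f_k(-q)$, and observe that for even $n$ the coefficient expressions \eqref{eq2} and \eqref{eq4} coincide rather than differ by a sign. Your write-up is slightly more explicit about why the evenness of $k$ is needed for $g_k(q)=f_k(-q)$, but the argument is otherwise identical.
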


\begin{proof}
We define the series $f_k(q)$ and $g_k(q)$ with $g_k(q) = f_k(-q)$ as in the proof of Theorem \ref{t6.3}. This time $n$ is even, and
the expressions in Equations \eqref{eq2} and \eqref{eq4} must be equal. In particular,
\begin{align*}
a(n) - 2\sum_{m= 1}^{\left\lfloor \sqrt{n}\right\rfloor} (-1)^{m-1} a_k(n-m^2) = a(n) - 2\sum_{m= 1}^{\left\lfloor \sqrt{\frac{n}{2}}\right\rfloor} (-1)^{m-1} a_k(n-2m^2),
\end{align*}
and the desired identity follows.
\end{proof}

As would be expected, the identity of Theorem \ref{t6.4} is once again shared by $p(n)$.

\begin{theorem}\label{t6.5}
For $n$ even, we have the identity
\begin{align*}
\sum _{m=1}^{\left\lfloor \sqrt{n}\right\rfloor } (-1)^{m-1}p(n-m^2) = \sum _{m=1}^{\left\lfloor \sqrt{\frac{n}{2}}\right\rfloor } (-1)^{m-1}p(n-2m^2) = \frac{p(n) - p_{\mathcal{DO}}(n)}2,
\end{align*}
where $p_{\mathcal{DO}}(n)$ denotes the number of partitions of $n$ into distinct odd parts.
\end{theorem}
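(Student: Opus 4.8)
The plan is to run the argument of Theorems \ref{t6.3} and \ref{t6.4} almost verbatim, but with the sequence $a_k(n)$ replaced by the partition function and its generating function $\sum_{n\ge 0} p(n)q^n = 1/(q;q)_\infty$. Concretely, I would define
\[
f(q) = (q;q)_\infty (q;q^2)_\infty \sum_{n=0}^\infty p(n)q^n, \qquad g(q) = (q;q)_\infty (-q;q^2)_\infty \sum_{n=0}^\infty p(n)q^n.
\]
The key simplification, absent in the $a_k$ setting, is that $(q;q)_\infty \sum_n p(n)q^n = 1$, so the factor $(q;q)_\infty$ cancels and these collapse to the clean products $f(q) = (q;q^2)_\infty$ and $g(q) = (-q;q^2)_\infty$.

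Next I would identify these products combinatorially. Since $(-q;q^2)_\infty = \prod_{n\ge 0}(1+q^{2n+1})$, its coefficient of $q^n$ is exactly $p_{\mathcal{DO}}(n)$, the number of partitions of $n$ into distinct odd parts. On the other hand, exactly as in the proof of Theorem \ref{t6.3}, Gauss's square power identity gives $(q;q)_\infty(-q;q^2)_\infty = \sum_{m=-\infty}^\infty (-1)^m q^{2m^2}$, so the coefficient of $q^n$ in $g(q)$ also equals $\sum_{m=-\infty}^\infty (-1)^m p(n-2m^2) = p(n) - 2\sum_{m=1}^{\lfloor\sqrt{n/2}\rfloor}(-1)^{m-1}p(n-2m^2)$. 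Equating the two expressions for this coefficient gives
\[
\sum_{m=1}^{\lfloor\sqrt{n/2}\rfloor}(-1)^{m-1}p(n-2m^2) = \frac{p(n)-p_{\mathcal{DO}}(n)}{2},
\]
which in fact holds for every $n$ and settles the second sum.

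For the first sum I would use $f(q)$ together with the parity observation $g(q) = f(-q)$ (noting that $(q;q^2)_\infty$ under $q\mapsto -q$ becomes $(-q;q^2)_\infty$, since every exponent $2n+1$ is odd). Writing $E_n$ for the coefficient of $q^n$ in $f(q) = (q;q^2)_\infty$, Gauss's identity $(q;q)_\infty(q;q^2)_\infty = \sum_m (-1)^m q^{m^2}$ gives $E_n = p(n) - 2\sum_{m=1}^{\lfloor\sqrt{n}\rfloor}(-1)^{m-1}p(n-m^2)$, while $g=f(-q)$ forces $(-1)^n E_n = p_{\mathcal{DO}}(n)$. Restricting now to $n$ even yields $E_n = p_{\mathcal{DO}}(n)$, hence $\sum_{m=1}^{\lfloor\sqrt{n}\rfloor}(-1)^{m-1}p(n-m^2) = [p(n)-p_{\mathcal{DO}}(n)]/2$, agreeing with the second sum.

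The one genuine subtlety, and the only place the hypothesis is used, is the parity restriction: the $g$-identity is valid for all $n$, whereas the $f$-identity only produces the same closed form when $n$ is even, because for $n$ odd the relation $g(q)=f(-q)$ gives $E_n = -p_{\mathcal{DO}}(n)$ instead. I expect the main care to lie in tracking this sign arising from $q\mapsto -q$ and in correctly reading off $(\pm q;q^2)_\infty$ as the (signed) generating functions for distinct-odd-part partitions; everything else is routine coefficient extraction identical to Theorems \ref{t6.3} and \ref{t6.4}.
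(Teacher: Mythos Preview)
Your argument is correct. For the first equality your route is identical to the paper's: both introduce $f(q)=(q;q^2)_\infty$ and $g(q)=(-q;q^2)_\infty$, use Gauss's identity to expand $(q;q)_\infty(\pm q;q^2)_\infty$ as theta sums, and invoke $g(q)=f(-q)$ together with the parity of $n$ to match coefficients.

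Where you diverge is in the second equality. The paper, having shown the two sums agree, imports the identity $p(n)-\sum_1-\sum_2=p_{\mathcal{DO}}(n)$ from \cite[Theorem~1]{CKS} as a black box and solves for the common value. You instead observe directly that $g(q)=(-q;q^2)_\infty=\prod_{j\ge 0}(1+q^{2j+1})$ is the generating function for $p_{\mathcal{DO}}(n)$, so equating the two expressions for the $q^n$-coefficient of $g$ gives $\sum_{m\ge 1}(-1)^{m-1}p(n-2m^2)=[p(n)-p_{\mathcal{DO}}(n)]/2$ for \emph{every} $n$, not just even $n$. This is more self-contained (no external citation needed) and slightly stronger; the paper's route, on the other hand, makes the connection to the literature explicit and recovers the cited CKS identity as a byproduct once the first equality is known.
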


\begin{proof}
The proof of the first equality is similar to Theorems \ref{t6.3} and \ref{t6.4}, using the series
\begin{align*}
&(q;q)_\infty (q;q^2)_\infty  \sum_{n=0}^\infty p(n)q^n\! =(q;q^2)_\infty \mbox{ and } (q;q)_\infty (-q;q^2)_\infty  \sum_{n=0}^\infty p(n)q^n\! =(-q;q^2)_\infty.
\end{align*}
The second equality then immediately follows from an identity in \cite[Theorem 1]{CKS} for $n$ even,
\begin{equation*}
p(n) - \sum _{m=1}^{\left\lfloor \sqrt{n}\right\rfloor } (-1)^{m-1}p(n-m^2) - \sum _{m=1}^{\left\lfloor \sqrt{\frac{n}{2}}\right\rfloor } (-1)^{m-1}p(n-2m^2) = p_{\mathcal{DO}}(n).\hfill \mbox{} \qedhere
\end{equation*}
\end{proof}

\section{Concluding Remarks} \label{concluding}

When considering the proof of Theorem \ref{t5.3}, one notices that multiplying by $(-q^2;q^2)_\infty$ leads directly to the triangular number recurrence relation. This naturally raises the question of what recurrence relations occur, if any, when one multiplies instead by $(-q^m;q^m)_\infty,\ 3\leq m\leq k.$ In particular, for $m\mid k$, one obtains
\begin{align*}
(q;q)_\infty&(-q^m;q^m)_\infty  \sum_{n=0}^\infty a_k(n)q^n\\
&=(-q^{m};q^{m})_\infty(q^{k};q^{k})_\infty\sum_{n=1}^{\infty}
	\frac{q^{kn} + 2q^{2kn} +\dots+ (k-1)q^{(k-1)kn}}{1 + q^{kn} + q^{2kn} +\dots+ q^{(k-1)kn}},
\end{align*}
where the r.h.s. is now a power series in $q^m$, which implies resulting recurrence relations for all $m \nmid n$. However, writing a closed expression for these relations would require a closed power series representation for $(q;q)_\infty(-q^m;q^m)_\infty,\ m\geq3$. While using \emph{Mathematica} \cite{math} allows one to find these relations for specific values of $m,k,n,$ to the best of our knowledge there does not appear to exist any closed form for the recurrence relations for $m\geq3$.

Similarly, Theorems \ref{t6.3} and \ref{t6.4} originally resulted from multiplying the generating function for $a_k(n)$ by either $( q;q^2)_\infty$ or $(-q;q^2)_\infty$ and investigating the resulting recurrence relations
\begin{align*}
a_k(n)=2\sum _{m=1}^{\left\lfloor \sqrt{n}\right\rfloor } (-1)^{m-1}a_k(n-m^2),\quad a_k(n)=2\sum _{m=1}^{\left\lfloor \sqrt{\frac{n}{2}}\right\rfloor } (-1)^{m-1}a_k(n-2m^2).
\end{align*}
Neither of these is generally true, but it came as quite a surprise that for even $k$ and arbitrary $n$ the one recurrence relation holds if and only if the other holds, which is now an easy consequence of Theorems \ref{t6.3} and \ref{t6.4}.
It is also interesting to note that both recurrence relations seem to hold quite often when considering $k=2,4$. In particular, for $k=2$, both recurrences are valid for the following list of values $n\le 500$:
\begin{eqnarray*}
& 1,5,11,22,23,27,30,41,61,65,66,71,72,79,93,100,115,116,117,120,\\
& 122,124, 131,135,166,183,191,203,204,214,216,223,224,229,236,\\
& 252,258,262,286,288,289,291,311,316,321,324,331,336,341,350,361,\\
& 366,367,378,383,390,403,414,416,418,425,440,468,470,471,488.
\end{eqnarray*}
Note that these are precisely the values of $n$ for which the term $q^n$ is missing in the power series
\begin{align*}
(q;q^2)_\infty (q^{2};q^{2})_\infty\sum_{n=1}^{\infty} \frac{q^{2n}}{1 + q^{2n}}= (q;q)_\infty \sum_{n=1}^{\infty} \frac{q^{2n}}{1 + q^{2n}}.
\end{align*}
For $k=4$, both recurrences are valid for the following list of values $n\le 500$:
\begin{align*}
&1, 2, 3, 6, 19, 53, 54, 58, 99, 143, 164, 170, 173, 257, 283, 302, 328, 338, 356, 359, 376, 473.
\end{align*}
We do not currently observe a pattern appearing in either of these lists.
This leads to a few interesting questions:
\begin{itemize}
\item[(1)] Are the sets of valid $n$ values for $k =2,4$ finite or infinite?
\item[(2)] Is there a way to characterize these sets in a closed form?
\end{itemize}

We conclude by remarking that it may be interesting to investigate analogies for $a_k(n)$ of the other relations for $p(n)$ in \cite{CKS} and that the conjectured inequalities regarding $a(n)=a_2(n)$ in Andrews, Merca \cite[Conjecture 1.6]{AM} extend quite naturally to $a_k(n),\ k\geq3$. Combinatorial proofs for our results in \S \ref{sec: recur reln} are very welcome.


\end{document}